\documentclass[9pt,english]{extarticle}
\usepackage[utf8]{inputenc}
\usepackage{amsmath}
\usepackage{amsfonts}
\usepackage{amssymb}
\usepackage{amsthm}
\usepackage{graphicx}
\usepackage{color}
\usepackage{algorithm}
\usepackage{algorithmicx}
\usepackage{algpseudocode}
\usepackage{caption}
\usepackage{subcaption}
\usepackage{url}
\usepackage{booktabs}
\newcommand{\algrule}[1][.2pt]{\par\vskip.5\baselineskip\hrule height #1\par\vskip.5\baselineskip}

\usepackage{palatino}

\newtheorem{theorem}{Theorem}
\newtheorem{lemma}{Lemma}

\newcommand{\ra}[1]{\renewcommand{\arraystretch}{#1}}
\newcommand{\eps}{\varepsilon}

\newcommand{\Prob}[1]{\ensuremath{\mathbb{P}\left(#1\right)}}

\DeclareMathAlphabet{\mathpzc}{OT1}{pzc}{m}{it}

\newcommand{\OO}{\mathcal{O}}

\newcommand{\RR}{\mathbb{R}}



\newcommand{\norm}[1]{\ensuremath{\left\|#1\right\|_2}}

\newcommand{\frobnorm}[1]{\ensuremath{\left\|#1\right\|_{\text{\rm F}}}}

\newcommand{\nr}[1]{\ensuremath{\mathrm{\textbf{\footnotesize nr}}\left(#1\right)}}

\newcommand{\sr}[1]{\ensuremath{\mathrm{\textbf{\footnotesize sr}}\left(#1\right)}}








\newcommand{\ignore}[1]{}

\newcommand{\mat}[1]{ {\ensuremath{\mathsf{#1} }}}

\newcommand{\matA}{\mat{A}}
\newcommand{\matB}{\mat{B}}

\newcommand{\matG}{\mat{G}}

\newcommand{\matX}{\mat{X}}
\newcommand{\matY}{\mat{Y}}

\newcommand{\matE}{\mat{E}}

\hyphenation{op-tical net-works semi-conduc-tor}
\usepackage[left=2cm,right=2cm,top=2cm,bottom=2cm]{geometry}

\usepackage{palatino}

\begin{document}

\title{\Huge{Approximate Matrix Multiplication \\ with Application to Linear Embeddings}}

\author{
  Anastasios Kyrillidis\\
  Computer and Communication Sciences, EPFL \\
  \texttt{anastasios.kyrillidis@epfl.ch}
  \and
  Michail Vlachos\\
   IBM Research Lab, Zurich \\
  \texttt{mvl@zurich.ibm.com}
  \and
  Anastasios Zouzias\thanks{The research leading to these results has received funding from the European Research Council under the European Union's Seventh Framework Programme (FP7/2007-2013) / ERC grant agreement $n^{o}$ 259569.}\\
   IBM Research Lab, Zurich \\
  \texttt{azo@zurich.ibm.com}
}

\maketitle

\begin{abstract}
In this paper, we study the problem of approximately computing the product of two real matrices. In particular, we analyze a dimensionality-reduction-based approximation algorithm due to Sarlos~\cite{sarlos2006improved}, introducing the notion of \emph{nuclear rank} as the ratio of  the nuclear norm over the spectral norm. 
The presented bound has improved dependence with respect to the approximation error (as compared to previous approaches), whereas the subspace -- on which we project the input matrices -- has dimensions proportional to the maximum of their nuclear rank and it is independent of the input dimensions.
In addition, we provide an application of this result to linear low-dimensional embeddings.  Namely, we show that any Euclidean point-set with bounded nuclear rank is amenable to projection onto number of dimensions that is independent of the input dimensionality, while achieving additive error guarantees.
\end{abstract}

\section{Introduction}

Living in the era of \emph{Big Data}, the excess of available information constitutes its manipulation and interpretation a strenuous task: In contrast to conventional wisdom where more data is a source of ``simplicity'' in statistical terms \cite{chandrasekaran2013computational}, large datasets embody preprocessing tasks of high time- and space-complexity, jeopardizing any hope for data analysis within reasonable time and with low computational cost. Due to such difficulties, one might be interested in \emph{screening} the data, even if \emph{no prior information is available}. I.e., identifying a coreset such that most of the latent structure/information that we want to infer is maintaned within an error that we can control. Such approaches have been witnessed in a broad class of data analysis problems such as data matrix sparsification for accelerated spectral calculation \cite{achlioptas2013near}, feature selection (a.k.a. column subset selection) for better interpretation of the results \cite{mahoney2009cur} and low-complexity calculations \cite{BoutsidisMD09}, feature extraction via dimensionality reduction techniques \cite{indyk1998approximate, BoutsidisZD10}, etc. 


In this work, we analyze a particular approximation algorithm of~\cite{sarlos2006improved} for the task of matrix multiplication with respect to their spectral norm \cite{zouzias2013randomized}. Within this context, we focus on approximately computing the product of two matrices \emph{when their intrinsic dimensionality is low, as expressed by the nuclear rank.} Here, we define the nuclear rank as the ratio of the nuclear norm of a matrix over its spectral norm. We provide an elementary proof based on the randomized algorithm described in \cite{sarlos2006improved,zouzias2013randomized}. As a result, we further strengthen the performance of the proposed scheme in situations where the allowed approximation error is small: Using a weaker notion of intrinsic dimensionality (i.e., nuclear rank instead of stable rank; see the definitions later in text), the dependence on the approximation error $\eps$ is improved to $\mathcal{O}(1/\eps^2)$ instead of $\mathcal{O}(1/\eps^4)$, indicating that the proposed scheme scales better when $\eps$ decreases. Table~\ref{table:mm} places our result into context with prior works.
\begin{table}[!h]
\centering
\caption{Summary of results on $\varepsilon$-approximate matrix multiplication $\matA \cdot \matB$ with respect to spectral norm -- $\nr{\cdot}$ and $\sr{\cdot}$ denote the nuclear and stable rank of a matrix, respectively.}\label{table:mm}
\ra{1.3}
\begin{tabular}{l c c c c} \toprule
\textbf{Metric} & \phantom{ab} & \textbf{\# of dimensions} & \phantom{ab} & \textbf{Reference} \\
\cmidrule{1-1} \cmidrule{3-3} \cmidrule{5-5}
 Rank		& & $\OO( ( r(\matA) + r(\matB) ) / \eps^2)$	& & \cite{MZ11}	\\
 Stable rank	& & $\OO( (\sr{\matA} + \sr{\matB}) / \eps^4)$	& 	& \cite{zouzias2013randomized}	\\
 Nuclear norm	&  & $\OO( (\nr{\matA} + \nr{\matB}) / \eps^2)$   &	&   Theorem~\ref{thm:main} \\
\bottomrule
\end{tabular}
\end{table}


As an application, we use this result to design dimensionality reducing linear embeddings that operate on a given dataset and preserve Euclidean distances between data points. In general, it is well-known that simply rotating, scaling and translating in a random way is adequate for this task: According to JL lemma \cite{johnson1984extensions}, such linear mapping approximately preserves (i.e., within a $\eps$-radius of points) the distances between all the pairs of points. While there are many random constructions that achieve this property with high probability, such schemes are oblivious to the geometry of the data set at hand, leaving space for further improvements. 

\vskip.07in
\noindent \textbf{Contributions:} The main contributions of this manuscript are the following:
\begin{enumerate}
\item [$(i)$] We provide a novel analysis on the approximate matrix multiplication problem based on the notion of nuclear rank (See Theorem~\ref{thm:main}).
\item [$(ii)$] We demonstrate an application of the approximate matrix multiplication bound to dimensionality reduction with relative guarantees. 
\end{enumerate}


\section{Related Work}
There is rich literature on the topic of linear dimensionality reduction and the techniques utilized for this purpose, mainly due to the wide range of applications it covers. Here, we highlight a few approaches that provide bounds depending on the input data: These bounds are usually stronger than the classic JL lemma, provided that the input points have low ``geometric complexity'', i.e., points that lie on a low dimensional subspace~\cite{sarlos2006improved}, lie on a manifold~\cite{baraniuk,clarkson2008tighter}, etc. Here, we focus only on results that provide theoretical guarantees.

For the special case where the input points lie on the unit sphere, there is a close connection between the Talagrand functional, denoted as $\gamma_2$, and the required number of dimensions for point distance preservation~\cite{mendelson},\cite{Klartag}. These results can be viewed as stronger bounds compared to the JL lemma. Baraniuk et al. provide bounds on the number of dimensions required by a random linear embedding to preserve Euclidean distances for the case of smooth manifolds~\cite{baraniuk}; for improvements and a nice exposition on the topic, see~\cite{clarkson2008tighter}. Sarlos provided improved bounds for algorithms on large matrices with applications on matrix multiplications, linear regression and low rank matrix approximation~\cite{sarlos2006improved}. On a similar flavor, the authors in~\cite{related:movingPoints} provide bounds for relative error approximations of points that lie on a surface. Several approaches exist for non-linear dimensionality reduction as well~\cite{JL:beyond,JL:beyond2}. From a different perspective, Indyk and Naor consider the ``doubling dimension'' and other related measures of dimensionality for preserving nearest neighbor queries~\cite{nnEmbed}.

Recent developments in \cite{hegde2012convex} describe \emph{deterministic constructions} of linear embeddings in polynomial time via SemiDefinite Programming (SDP) relaxations, under the assumption that the data is known \emph{apriori} and fixed. 
For the latest developments on this topic, we refer the reader to \cite{grant2013nearly}.

\section{Preliminaries}

A scalar is denoted by an italic letter, e.g. $a$. A column vector is denoted by a bold lowercase letter, e.g. $\mathbf{a} \in \mathbb{R}^n$ whose $i$-th entry is $a_i$. A matrix is denoted by a mathtype uppercase letter, e.g., $\matA \in \mathbb{R}^{n \times d}$ with $(i, j)$-th entry $\matA_{i, j}$. To denote the $i$-th row and $j$-th column of $\matA$, we use the $\mathbf{a}_i$ and $\mathbf{a}^{j}$, respectively. Use $\norm{\matA} = \max_{\mathbf{x}: \|\mathbf{x}\|_2 = 1} \|\matA\mathbf{x}\|_2$ to denote the spectral norm of $\matA$ and $\frobnorm{\matA} = \sqrt{\sum_i \sum_j \matA_{i, j}^2}$ to represent its Frobenius norm. 

In our analysis, we utilize the notion of {\it nuclear norm}. Given a matrix $\matA \in \mathbb{R}^{n \times d}$ with rank $r(\matA)$, its nuclear norm $\|\matA\|_{\star}$ is given by $\|\matA\|_{\star} = \sum_{i = 1}^{r(\matA)} \sigma_i$ where $\sigma_i$ is the $i$-th largest singular value; for multiple matrices, we also use $\xi_i$ to denote a singular value. Using the nuclear norm, we define the \emph{nuclear rank} of $\matA$ as $\nr{\matA} := \|\matA\|_{\star} / \norm{\matA}$. Moreover, we use the notion of \emph{stable rank}, which is defined as $\sr{\matA} = \frobnorm{\matA}^2 / \norm{\matA}^2$. 
Throughout the paper, we will denote by $\matG$ a $t \times d$ random matrix whose entries are independent Gaussian random variables with variance $1/t$, i.e., $\matG_{ij} \sim \mathcal{N}(0, 1/t)$.

\section{Main result}
We consider the following problem:
\vskip.1in
\noindent \textsc{Approximate Matrix Multiplication Problem:} Let $\matX \in \RR^{n \times d}$ and $\matY \in \RR^{d \times m}$ be two arbitrary matrices, $\eps > 0$ is an approximation parameter and $0<\delta<1$ the failure probability. We desire to construct \emph{sketched} matrices $\widehat{\matX} \in \RR^{n \times t}$ and $\widehat{\matY} \in \RR^{t \times m}$ where $t \ll d$ such that:
\begin{align}\nonumber
\norm{\widehat{\matX} \widehat{\matY} - \matX \matY} \leq  \eps \norm{\matX} \cdot \norm{\matY},
\end{align} holds with probability at least $1-\delta$.
\vskip.1in
We now state the main theorem of the paper:
\begin{theorem}{\label{thm:main}}
Fix $0 < \eps, \delta < 1$ and assume arbitrary matrices $\matX \in \RR^{n \times d}$ and $\matY \in \RR^{d \times m}$. Set $\widehat{\matX} = \matX \matG^\top$ and $\widehat{\matY} = \matG \matY$. If $t = \Omega \left( \frac{\nr{\matX} + \nr{\matY} + \log(\log(1/\eps)) +\log(1/\delta)} {\eps^2} \right)$, then the following holds:
\begin{equation}{\label{eq:thm}}
\Prob{ \norm{\widehat{\matX} \widehat{\matY} - \matX \matY} \leq  \eps \norm{\matX} \cdot \norm{\matY}} \geq 1 - \delta,
\end{equation}
\end{theorem}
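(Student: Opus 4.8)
The plan is to reduce the spectral-norm error to a weighted bilinear form in the centered random matrix $\matG^\top\matG - \Id$, and then control that form by a dyadic (``poor man's chaining'') decomposition of the singular values in which the nuclear norm plays the role of a metric-entropy budget. First, by homogeneity I would normalize so that $\norm{\matX} = \norm{\matY} = 1$, reducing the claim to $\norm{\matX\matG^\top\matG\matY - \matX\matY} \le \eps$ with probability $\ge 1-\delta$. Writing the thin SVDs $\matX = \matU_X\matSig_X\matV_X^\top$ and $\matY = \matU_Y\matSig_Y\matV_Y^\top$, and noting that $\matX\matG^\top\matG\matY - \matX\matY = \matX(\matG^\top\matG - \Id)\matY$ while left multiplication by the orthonormal $\matU_X$ and right multiplication by the orthonormal $\matV_Y^\top$ leave the spectral norm unchanged, this quantity equals
\[
\norm{\matSig_X\,\matV_X^\top(\matG^\top\matG - \Id)\,\matV_Y\,\matSig_Y}.
\]
The diagonal weights $\matSig_X,\matSig_Y$ are precisely what should upgrade the bound from rank (as in \cite{MZ11}) to nuclear rank: directions associated with small singular values need to be resolved only coarsely.

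To make this quantitative I would dyadically peel the singular values. For $j = 0,1,\dots$ let the $j$-th band of $\matX$ consist of the singular triples whose singular value lies in $(2^{-j-1},2^{-j}]$; since the singular values of $\matX$ sum to $\|\matX\|_\star = \nr{\matX}$, each such band has rank at most $2^{j+1}\nr{\matX}$, spans a subspace $S_j^X \subseteq \RR^d$ of that dimension, and the corresponding block $\matX_{(j)}$ satisfies $\norm{\matX_{(j)}} \le 2^{-j}$; analogously for $\matY$, and all bands with $2^{-j} \le \eps$ are lumped into a single tail. For a pair of bands $(j,j')$ the associated piece $\matX_{(j)}(\matG^\top\matG - \Id)\matY_{(j')}$ is at most $2^{-j}\cdot 2^{-j'}\cdot\norm{\matP_{j,j'}(\matG^\top\matG - \Id)\matP_{j,j'}}$, where $\matP_{j,j'}$ projects onto $S_j^X + S_{j'}^Y$, of dimension $k_{j,j'} = \OO(2^j\nr{\matX} + 2^{j'}\nr{\matY})$. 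On a fixed $k$-dimensional subspace, $\matG^\top\matG - \Id$ restricts to $\frac{1}{t}\matZ^\top\matZ - \Id$ for a $t\times k$ standard Gaussian $\matZ$, and the standard non-asymptotic estimate gives $\norm{\frac{1}{t}\matZ^\top\matZ - \Id} = \OO\bigl(\sqrt{k/t} + k/t + \sqrt{\log(1/\delta')/t} + \log(1/\delta')/t\bigr)$ with probability $\ge 1-\delta'$.

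Summing over the $\OO(\log^2(1/\eps))$ band pairs, the geometric series $\sum_{j,j'} 2^{-j-j'}\sqrt{(2^j\nr{\matX} + 2^{j'}\nr{\matY})/t}$ collapses to its leading term $\OO(\sqrt{(\nr{\matX} + \nr{\matY})/t})$, while the remaining lower-order contributions together with the tail-times-anything terms are $\OO(\eps)$ under the stated bound on $t$; a union bound over the $\OO(\log^2(1/\eps))$ events with $\delta' = \delta/\OO(\log^2(1/\eps))$ turns $\log(1/\delta')$ into $\log(1/\delta) + \OO(\log\log(1/\eps))$, and forcing the total to be $\le \eps$ yields $t = \Omega\bigl((\nr{\matX} + \nr{\matY} + \log(1/\delta) + \log\log(1/\eps))/\eps^2\bigr)$. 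I expect the main obstacle to be the tail estimate: bounding the aggregate effect of the possibly very numerous small singular values without paying a factor of the ambient dimension $d$, which comes down to observing that the ellipsoid of tail singular values decays geometrically and hence has low metric complexity — in effect replacing the naive $\eps$-net over spheres by a chaining functional governed by the nuclear (and Frobenius) norms rather than the ranks of $\matX$ and $\matY$. This same substitution is what buys the $1/\eps^2$ rate in place of the $1/\eps^4$ of \cite{zouzias2013randomized}, since it avoids composing two separate spectral-norm sketching steps.
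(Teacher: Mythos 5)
Your proposal follows essentially the same route as the paper's proof: normalize to $\norm{\matX}=\norm{\matY}=1$, band the singular spectrum dyadically so that the $j$-th band has rank at most $O(2^{j})$ times the nuclear rank, apply a rank-dependent concentration bound for $\matG^\top\matG-\Id$ to each pair of bands, collapse the resulting geometric series to $O\bigl(\sqrt{(\nr{\matX}+\nr{\matY})/t}\bigr)$, and union-bound over the $O(\log^2(1/\eps))$ band pairs to pick up the $\log\log(1/\eps)$ term (the paper uses base-$e$ bands and phrases the rank count as $\sum_l r(\matX^{l}_{\theta})\norm{\matX^{l}_{\theta}}\le e\|\matX\|_\star$, but this is the same observation). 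The one step you flag as the main obstacle --- the tail of small singular values --- is dispatched in the paper without any chaining: it splits off the best rank-$\theta$ part with $\theta=\Theta((\nr{\matX}+\nr{\matY})/\eps^2)$, bounds the cross terms by submultiplicativity as $\norm{\matX_{\theta}^c\matG^\top}\cdot\norm{\matG\matY_{\theta}}$, and controls each factor by Gaussian concentration of the operator norm together with $\frobnorm{\cdot}\le\|\cdot\|_\star$, which is exactly the Frobenius/nuclear control you anticipated.
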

%
We devote the rest of this section to prove Theorem~\ref{thm:main}. We recall a well-established result from the literature for Gaussian matrices; observe the lack of any upper bound on the error parameter.
\begin{lemma}\label{lem:mm:rank}
Fix $\zeta > 0$ and let $\matX \in \RR^{n \times d}$ and $\matY \in \RR^{d \times m}$. Set $\widehat{\matX} = \matX \matG^\top$ and $\widehat{\matY} = \matG \matY$. Then,
\begin{small}
\begin{align}\nonumber
\Prob{ \norm{\widehat{\matX}\widehat{\matY} - \matX \matY } > \zeta \norm{\matX}\norm{\matY}} \leq c_2^{r(\matX)+r(\matY)} e^{- c_1t \zeta \cdot \min(\zeta, 1)}
\end{align}
\end{small}
where $c_1$ is the Hanson-Wright constant~\cite{ineq:HR} and $c_2=18$.
\end{lemma}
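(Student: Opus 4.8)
The plan is to first reduce the spectral-norm error to a centered quadratic form in the Gaussian entries of $\matG$, restricted to the subspaces spanned by the singular vectors of $\matX$ and $\matY$, and then to control that form by combining the Hanson--Wright inequality with an $\eps$-net argument over two spheres whose dimensions are exactly $r(\matX)$ and $r(\matY)$. The point of the reduction is that the ambient dimension $d$ disappears and only the two ranks survive, which is precisely what produces the factor $c_2^{r(\matX)+r(\matY)}$.

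First I would write $\widehat{\matX}\widehat{\matY}=\matX\matG^\top\matG\matY$, note that $\EE[\matG^\top\matG]=\Id$, and express the error as $\matE=\matX(\matG^\top\matG-\Id)\matY$. Substituting the thin SVDs $\matX=\matU_X\matSig_X\matV_X^\top$ and $\matY=\matU_Y\matSig_Y\matV_Y^\top$, grouping as $\matE=(\matU_X\matSig_X)\,\matV_X^\top(\matG^\top\matG-\Id)\matU_Y\,(\matSig_Y\matV_Y^\top)$, and using submultiplicativity of the spectral norm together with $\norm{\matU_X\matSig_X}=\norm{\matX}$ and $\norm{\matSig_Y\matV_Y^\top}=\norm{\matY}$, I obtain
\[
\norm{\matE}\le\norm{\matX}\,\norm{\matY}\,\norm{\matV_X^\top(\matG^\top\matG-\Id)\matU_Y}.
\]
Writing $\matP:=\matV_X\in\RR^{d\times r(\matX)}$ and $\matQ:=\matU_Y\in\RR^{d\times r(\matY)}$ (both with orthonormal columns) and $\matZ:=\matP^\top(\matG^\top\matG-\Id)\matQ$, it therefore suffices to bound $\Prob{\norm{\matZ}>\zeta}$.

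Next I would establish a pointwise tail bound. Fix unit vectors $\vecu\in S^{r(\matX)-1}$, $\vecv\in S^{r(\matY)-1}$ and set $\mathbf{a}=\matP\vecu$, $\mathbf{b}=\matQ\vecv$, which are unit vectors in $\RR^d$. With $\mathbf{g}=\mathrm{vec}(\matG)$ I rewrite the bilinear form as a centered quadratic form $\mathbf{a}^\top(\matG^\top\matG-\Id)\mathbf{b}=\mathbf{g}^\top M\mathbf{g}-\EE[\mathbf{g}^\top M\mathbf{g}]$ with $M=\Id_t\otimes\tfrac12(\mathbf{a}\mathbf{b}^\top+\mathbf{b}\mathbf{a}^\top)$. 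A short computation gives $\frobnorm{M}^2=\tfrac{t}{2}(1+(\mathbf{a}^\top\mathbf{b})^2)\le t$ and $\norm{M}=\tfrac12(1+|\mathbf{a}^\top\mathbf{b}|)\le1$. Rescaling the entries of $\matG$ to unit variance and invoking Hanson--Wright then yields, for every $s>0$,
\[
\Prob{\,|\mathbf{a}^\top(\matG^\top\matG-\Id)\mathbf{b}|>s\,}\le 2\exp\!\left(-c_1\,t\,s\min(s,1)\right),
\]
whose two regimes $s^2$ and $s$ are exactly what produce the $\zeta\min(\zeta,1)$ of the statement, and which — crucially — needs no upper bound on $s$. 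To pass to $\norm{\matZ}$, I take $\tfrac14$-nets $\mathcal{N}_{\vecu},\mathcal{N}_{\vecv}$ of the two spheres, of sizes at most $9^{r(\matX)}$ and $9^{r(\matY)}$, use the standard bilinear estimate $\norm{\matZ}\le2\max_{\vecu\in\mathcal{N}_{\vecu},\,\vecv\in\mathcal{N}_{\vecv}}|\vecu^\top\matZ\vecv|$, and union-bound with $s=\zeta/2$. This gives a prefactor $2\cdot9^{r(\matX)+r(\matY)}\le18^{r(\matX)+r(\matY)}$ (using $r(\matX)+r(\matY)\ge1$), i.e. $c_2=18$, while the factors of $\tfrac12$ from the net are absorbed into $c_1$ after checking $(\zeta/2)\min(\zeta/2,1)\ge\tfrac14\,\zeta\min(\zeta,1)$ for all $\zeta>0$.

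The genuinely important — and the only conceptually delicate — step is the SVD reduction, since it simultaneously replaces the ambient dimension $d$ by the ranks and exposes the error as a centered Gaussian chaos amenable to Hanson--Wright; once $\matZ$ is isolated, everything is a textbook net-and-union-bound argument. The error-prone parts I expect to be the exact norm computations for the rank-two symmetric block $\tfrac12(\mathbf{a}\mathbf{b}^\top+\mathbf{b}\mathbf{a}^\top)$ (whose nonzero eigenvalues are $\tfrac12(1+\mathbf{a}^\top\mathbf{b})$ and $\tfrac12(\mathbf{a}^\top\mathbf{b}-1)$) and the variance rescaling inside Hanson--Wright; the remaining difficulty is purely bookkeeping of constants to land precisely on $c_2=18$.
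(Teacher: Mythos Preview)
Your proposal is correct and follows essentially the same approach the paper sketches: the paper's one-line proof simply states that the lemma is a corollary of the Hanson--Wright inequality combined with a net argument over the unit spheres in the row span of $\matX$ and the column span of $\matY$, and your SVD reduction plus two $1/4$-nets on $S^{r(\matX)-1}$ and $S^{r(\matY)-1}$ is exactly a detailed instantiation of that outline, with the bookkeeping $2\cdot 9^{r(\matX)+r(\matY)}\le 18^{r(\matX)+r(\matY)}$ landing on $c_2=18$ as required.
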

\begin{proof}
The proof is a corollary of the Hanson-Wright inequality~\cite[Theorem 1.1]{ineq:HR}, combined with a dense net argument on the unit sphere defined by the union of the column and row span of $\matY$ and $\matX$, respectively.
\end{proof}

By homogeneity, we observe that, to prove $\norm{\matX \matG^\top \matG \matY - \matX \matY} \leq \varepsilon \norm{\matX} \norm{\matY}$ is satisfied with some probability, it suffices to prove the same argument for $\norm{\frac{\matX}{\norm{\matX}} \matG^\top \matG \frac{\matY}{\norm{\matY}} - \frac{\matX}{\norm{\matX}} \cdot \frac{\matY}{\norm{\matY}}} \leq \varepsilon$. Thus, without loss of generality, we can assume that $\norm{\matX} = \norm{\matY} = 1$.

Let $\matX = \sum_{i=1}^{r(\matX)} \sigma_i \mathbf{u}_i \mathbf{v}_i^\top$ be the singular value decomposition (SVD) of $\matX$, where $\sigma_1, \dots, \sigma_{r(\matX)}$ denote the singular values of $\matX$ and, $\mathbf{u}_1, \dots, \mathbf{u}_{r(\matX)} \in \RR^n$ and $\mathbf{v}_1, \dots, \mathbf{v}_{r(\matX)} \in \RR^d$ denote the left and right singular vectors, respectively. Similarly, we define the SVD of $\matY$ as $\matY = \sum_{j=1}^{r(\matY)} \xi_j \mathbf{p}_j \mathbf{q}_j^\top$.

Define $\theta := \left \lfloor c_3\cdot \frac{\nr{\matX} + \nr{\matY}}{\eps^2}\right\rfloor$ where $c_3 > 1$ is a constant and set $t:= \theta + 8\ln(8/\delta) /\eps^2 + \ln\left( \left\lceil \ln(e/\eps)\right\rceil\right) / (c_1 \cdot \eps^2)$. Given $\theta$, one can decompose $\matX$ as $\matX = \matX_{\theta} + \matX_{\theta}^c$ where $\matX_{\theta}$ represents the \emph{best} rank-$\theta$ approximation of $\matX$ and $\matX_{\theta}^c := \matX - \matX_{\theta}$. Similarly, we can decompose $\matY = \matY_{\theta} + \matY_{\theta}^c$.
Now, by the triangle inequality:
\begin{align}
\|\matX \matG^\top \matG \matY &- \matX \matY\|_2 \leq \norm{ \matX_{\theta}\matG^\top \matG\matY_{\theta} - \matX_{\theta} \matY_{\theta}} \label{eq:result_1}\\
&+ \norm{\matX_{\theta}^c\matG^\top \matG\matY_{\theta}} + \norm{\matX_{\theta}\matG^\top \matG\matY_{\theta}^c} + \norm{\matX_{\theta}^c\matG^\top \matG\matY_{\theta}^c} \label{eq:termNormGaussian}\\
&+ \norm{\matX_{\theta}^c\matY_{\theta}} + \norm{\matX_{\theta}\matY_{\theta}^c} + \norm{\matX_{\theta}^c\matY_{\theta}^c}. \label{eq:termConstant}
\end{align} 

Bounding the term appearing on the right hand side of \eqref{eq:result_1} is the most challenging task and it is the main technical contribution of this paper. Before we start, we need to define some notation. Let $l, s$ be non-negative integers and define the sets $\mathcal{I} = \lbrace \mathcal{I}_1, \mathcal{I}_2, \dots \rbrace$ and $\mathcal{J} = \lbrace \mathcal{J}_1, \mathcal{J}_2, \dots \rbrace$ where:
\begin{align}{\label{eq:def_sets1}} \vspace{-0.1cm}
\mathcal{I}_l := \left\{ i ~ : ~e^{-l} \leq \sigma_i < e^{-l + 1} \right\}, \vspace{-0.1cm}
\end{align} and
\begin{align}{\label{eq:def_sets2}} \vspace{-0.1cm}
\mathcal{J}_s := \left\{ j ~ : ~e^{-s} \leq \xi_j < e^{-s + 1} \right\}. \vspace{-0.1cm}
\end{align} 
We remind that, since $\norm{\matX} = \norm{\matY} = 1$, $\sigma_i \leq 1, ~\xi_j \leq 1$, for any $i, j$.

Given a set $\mathcal{I}_l$, we define $\matX^{l}_{\theta} \in \RR^{n \times d}$ as $\matX^{l}_{\theta} := \sum_{i \in \mathcal{I}_l} \sigma_i \mathbf{u}_i \mathbf{v}_i^\top$; similarly, we have $\matY^{s}_{\theta} := \sum_{j \in \mathcal{J}_s} \xi_j \mathbf{p}_j \mathbf{q}_j^\top$ for the case of $\matY$. We highlight the following key observations:
\begin{itemize}
\item [$(i)$] $\matX_{\theta} = \sum_{l = 1}^{L} \matX^{l}_{\theta}$ and $\matY_{\theta} = \sum_{s = 1}^{S} \matY^{s}_{\theta}$, for some bounded positive integers $L, S$.
\item [$(ii)$] $r(\matX^{l}_{\theta}) = \text{card}(\mathcal{I}_l)$ and $r(\matY^{s}_{\theta}) = \text{card}(\mathcal{J}_s)$.
\item [$(iii)$] $\norm{\matX^{l}_{\theta}} < e^{-l + 1}$ and $\norm{\matY^{s}_{\theta}} < e^{-s + 1}$ by definition of sets $\mathcal{I}_l$ and $\mathcal{J}_s$ respectively in \eqref{eq:def_sets1}-\eqref{eq:def_sets2}.
\end{itemize}
In our analysis, we select $L$ and $S$ such that $\norm{\matX_{\theta}^c} \leq \eps$ and $\norm{\matY_{\theta}^c} \leq \eps$. By definition of $\mathcal{I}_{l}, ~\mathcal{J}_s$, one can deduce that $\norm{\matX_{\theta}^c} \leq e^{L+1}$ and $\norm{\matY_{\theta}^c} \leq e^{S+1}$, for fixed $L$ and $S$. To this end, we require $L = \left \lceil \ln\left(\frac{e}{\eps}\right) \right \rceil$ and $S = \left \lceil \ln\left(\frac{e}{\eps}\right) \right \rceil$ for both $\norm{\matX_{\theta}^c} \leq \eps$ and $\norm{\matY_{\theta}^c} \leq \eps$ to be satisfied.

Based on the definitions above and using triangle inequality on $\norm{\matX_{\theta} \left(\matG^\top \matG - \mat{I}\right)\matY_{\theta}} $, we obtain:
\begin{align}
\norm{\matX_{\theta} \left(\matG^\top \matG - \mat{I}\right)\matY_{\theta}} &\stackrel{(i)}{=} \norm{\left(\sum_{l = 1}^{L} \matX^{l}_{\theta}\right) \left(\matG^\top \matG - \mat{I}\right)\matY_{\theta} } \nonumber \\
&\stackrel{(iv)}{\leq} \sum_{l = 1}^{L} \norm{\matX^{l}_{\theta}\left(\matG^\top \matG - \mat{I}\right)\matY_{\theta}} \nonumber \\ 
&\stackrel{(i)}{=} \sum_{l = 1}^{L}\norm{\matX^{l}_{\theta} \left(\matG^\top \matG - \mat{I}\right)\left(\sum_{s = 1}^{S} \matY^{s}_{\theta}\right)} \nonumber \\
&\stackrel{(iv)}{\leq} \sum_{s = 1}^{S} \left(\sum_{l = 1}^{L} \norm{\matX^{l}_{\theta}\left(\matG^\top \matG - \mat{I}\right)\matY^{s}_{\theta}}\right). \label{eq:result_2}
\end{align} 

For a pair $(l,~s)$ of positive integers, we define the following event, over the probability space defined by $\matG$:
\begin{small}
\begin{align}
&\Omega(l,~s) \nonumber \\ &:= \left\{ \norm{\matX^{l}_{\theta}\left(\matG^\top \matG - \mat{I}\right) \matY^{s}_{\theta}} \geq \left( \eps + c_2\frac{r(\matX^{l}_{\theta}) + r(\matY^{s}_{\theta})}{ c_1 \eps t} \right) \norm{\matX^{l}_{\theta}} \norm{\matY^{s}_{\theta}}\right\}, \nonumber
\end{align}
\end{small} where $c_1,~c_2 > 0$ are positive constants. 
According to Lemma~\ref{lem:mm:rank}, the above event holds with probability:
\begin{align}
\mathbb{P}\left(\Omega(l, s)\right) &\leq c_2^{r(\matX_{\theta}^{l}) + r(\matY_{\theta}^s)}\cdot e^{-c_1 \gamma_{l,s}t \cdot \min(\gamma_{l,s}, 1)} \label{eq:proba1}
\end{align}
where $\gamma_{l,s} = \eps + c_2\frac{r(\matX^{l}_{\theta}) + r(\matY^{s}_{\theta})}{ c_1 \eps t}$. A useful observation for \eqref{eq:proba1} is given in the next lemma; the proof is provided in the appendix.
\begin{lemma}{\label{lem:prob_bound}}
Fix integer $t > 0$. The following inequality holds
\begin{align}{\label{eq:proba3}}
\Prob{\Omega(l, s)} &\leq e^{-c_1 t \cdot \min(\eps, \eps^2)}, \quad \forall l, s.
\end{align}
\end{lemma}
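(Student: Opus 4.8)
The plan is to analyze the exponent $c_1 \gamma_{l,s} t \cdot \min(\gamma_{l,s}, 1)$ in \eqref{eq:proba1} and show it dominates both the constant $c_1 t \min(\eps, \eps^2)$ we want, \emph{and} the entropy factor $c_2^{r(\matX_\theta^l) + r(\matY_\theta^s)}$ coming from the net argument. Write $\rho := r(\matX_\theta^l) + r(\matY_\theta^s)$, so that $\gamma_{l,s} = \eps + \tfrac{c_2 \rho}{c_1 \eps t}$. The first observation is that $\gamma_{l,s} \ge \eps$ trivially, and hence $\min(\gamma_{l,s},1) \ge \min(\eps,1) = \eps$ (recall $\eps<1$). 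Therefore
\begin{align}\nonumber
c_1 \gamma_{l,s} t \cdot \min(\gamma_{l,s},1) \;\ge\; c_1 t \cdot \gamma_{l,s} \cdot \eps \;=\; c_1 t \eps^2 \;+\; c_2 \rho.
\end{align}
This is the key inequality: the second summand $c_2\rho$ is exactly what we need to absorb the logarithm of the entropy factor $c_2^{\rho}$.

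Given this, I would split off the entropy term: from \eqref{eq:proba1},
\begin{align}\nonumber
\Prob{\Omega(l,s)} \;\le\; c_2^{\rho}\, e^{-c_1 t \eps^2 - c_2 \rho} \;=\; e^{\rho \ln c_2 - c_2 \rho}\, e^{-c_1 t \eps^2} \;=\; e^{-\rho(c_2 - \ln c_2)}\, e^{-c_1 t \eps^2}.
\end{align}
Since $c_2 = 18 > \ln 18 \approx 2.89$, the factor $e^{-\rho(c_2 - \ln c_2)} \le 1$ for every $\rho \ge 0$, so $\Prob{\Omega(l,s)} \le e^{-c_1 t \eps^2}$. Finally, because $\eps < 1$ we have $\eps^2 = \min(\eps,\eps^2)$, which gives exactly the claimed bound $\Prob{\Omega(l,s)} \le e^{-c_1 t \min(\eps,\eps^2)}$, uniformly over all pairs $(l,s)$.

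The only subtlety — and what I would check carefully — is the case analysis hidden in $\min(\gamma_{l,s},1)$: the bound $\min(\gamma_{l,s},1) \ge \eps$ used above is valid regardless of whether $\gamma_{l,s} \le 1$ or $\gamma_{l,s} > 1$, since $\gamma_{l,s} \ge \eps$ and $1 \ge \eps$, so no genuine branching is needed. One should also make sure the constant $c_1$ appearing in the definition of $\gamma_{l,s}$ is the same Hanson–Wright constant $c_1$ from Lemma~\ref{lem:mm:rank} (it is, by construction of the event $\Omega(l,s)$), so that the $c_1$'s cancel cleanly in the product $\gamma_{l,s}\cdot\eps \supset \tfrac{c_2\rho}{c_1 \eps t}\cdot\eps$. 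No real obstacle here; the lemma is essentially a bookkeeping step designed so that $c_2 > \ln c_2$ buys the cancellation of the net's entropy.
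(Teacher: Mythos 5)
Your proof is correct and follows essentially the same route as the paper: lower-bound the exponent $c_1\gamma_{l,s}t\min(\gamma_{l,s},1)$ using $\gamma_{l,s}\geq\eps$ so that the term $c_2\rho$ emerges and absorbs the entropy factor $c_2^{\rho}$ via $c_2>\ln c_2$. The only difference is cosmetic: the paper splits into the cases $\gamma_{l,s}<1$ and $\gamma_{l,s}\geq 1$, whereas your observation that $\min(\gamma_{l,s},1)\geq\eps$ holds in both cases lets you handle them uniformly, which is a clean simplification.
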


\begin{figure*}[tb]
\begin{align}
\|\matX_{\theta} \big(\matG^\top \matG &- \mat{I}\big)\matY_{\theta}\|_2 \leq  \sum_{s = 1}^{S} \sum_{l = 1}^{L} \left( \left( \eps + c_2\frac{r(\matX^{l}_{\theta}) + r(\matY^{s}_{\theta})}{ c_1 t} \right) \norm{\matX^{l}_{\theta}} \norm{\matY^{s}_{\theta}} \right) \nonumber \\ 
&= \eps \sum_{s = 1}^{S} \sum_{l = 1}^{L} \norm{\matX^{l}_{\theta}} \norm{\matY^{s}_{\theta}}  + \frac{c_2}{c_1 t}\sum_{s = 1}^{S} \sum_{l = 1}^{L} r(\matX^{l}_{\theta})\norm{\matX^{l}_{\theta}} \norm{\matY^{s}_{\theta}} + \frac{c_2}{c_1 t}\sum_{s = 1}^{S} \sum_{l = 1}^{L} r(\matY^{s}_{\theta}) \norm{\matX^{l}_{\theta}} \norm{\matY^{s}_{\theta}} \nonumber \\
&\stackrel{(iii)}{<} \eps\left(\sum_{s = 1}^{S} e^{-s + 1}\right)\left(\sum_{l = 1}^{L} e^{- l + 1}\right) + \frac{c_2}{c_1 t} \left(\sum_{s = 1}^{S} e^{-s + 1}\right) \left(\sum_{l = 1}^{L} r(\matX^{l}_{\theta})\norm{\matX^{l}_{\theta}}\right) + \frac{c_2}{c_1 t} \left(\sum_{l = 1}^{L} e^{-l + 1}\right) \left(\sum_{s = 1}^{S}r(\matY^{s}_{\theta}) \norm{\matY^{s}_{\theta}}\right) \nonumber \\ 
&\leq \eps\left(\sum_{s = 1}^{\infty} e^{-s + 1}\right)\left(\sum_{l = 1}^{\infty} e^{- l + 1}\right) + \frac{c_2}{c_1 t} \left(\sum_{s = 1}^{\infty} e^{-s + 1}\right) \left(\sum_{l = 1}^{L} r(\matX^{l}_{\theta})\norm{\matX^{l}_{\theta}}\right) + \frac{c_2}{c_1 t} \left(\sum_{l = 1}^{\infty} e^{-l + 1}\right) \left(\sum_{s = 1}^{S}r(\matY^{s}_{\theta}) \norm{\matY^{s}_{\theta}}\right) \nonumber \\ 
&= \frac{\eps \cdot e^2}{(e - 1)^2} + \frac{c_2}{c_1 t}\cdot\frac{e}{e - 1} \left[\left(\sum_{l = 1}^{L} r(\matX^{l}_{\theta})\norm{\matX^{l}_{\theta}}\right) + \left(\sum_{s = 1}^{S}r(\matY^{s}_{\theta}) \norm{\matY^{s}_{\theta}}\right)\right] \label{eq:result_3}
\end{align} 
\hrulefill
\end{figure*}
By conditioning on the event $ \bigcap_{s = 1}^{S}\bigcap_{l = 1}^{L} \Omega(l,s)^c $, we can upper bound \eqref{eq:result_2} as in \eqref{eq:result_3} where the infinite series $\sum_{i = 1}^{\infty} e^{ - i+1} = \frac{e}{e- 1}$ is used in the last equality.

To proceed, we observe the following for $\sum_{l = 1}^{L} r(\matX^{l}_{\theta})\norm{\matX^{l}_{\theta}}$; similar reasoning applies for $\sum_{s = 1}^{S}r(\matY^{s}_{\theta}) \norm{\matY^{s}_{\theta}}$. By construction in $(iii)$: 
\begin{align}
\norm{\matX^{l}_{\theta}} < e^{-l+1} = e \cdot e^{-l} \leq e \cdot \min_{i\in \mathcal{I}_l} \sigma_i. \nonumber
\end{align} Thus, it is obvious that:
\begin{align}
r(\matX^{l}_{\theta}) \cdot \norm{\matX^{l}_{\theta}} < e \cdot r(\matX^{l}_{\theta}) \cdot \min_{i\in \mathcal{I}_l} \sigma_i \leq e\cdot \|\matX^{l}_{\theta}\|_{\star}, \nonumber
\end{align} since $\|\matE\|_{\star} \geq  r(\matE) \cdot \min_j \sigma_j(\matE)$, for any matrix $\matE$. To this end,
\begin{align}
\sum_{l = 1}^{L} r(\matX^{l}_{\theta})\norm{\matX^{l}_{\theta}} < e \sum_{l = 1}^{L} \|\matX^{l}_{\theta}\|_{\star} = e \|\matX_{\theta}\|_{\star}.  \nonumber
\end{align} Similarly, we have $\sum_{s = 1}^{S}r(\matY^{s}_{\theta}) \norm{\matY^{s}_{\theta}} < e \|\matY_{\theta}\|_{\star}$ and, therefore, \eqref{eq:result_3} becomes:
\begin{align}
\norm{\matX_{\theta} \left(\matG^\top \matG - \mat{I}\right)\matY_{\theta}} &< \frac{\eps \cdot e^2}{(e - 1)^2} + \frac{c_2}{c_1 t}\cdot\frac{e}{e - 1} \left(\|\matX\|_{\star} + \|\matY\|_{\star}\right) \nonumber \\ 
&= \frac{\eps \cdot e^2}{(e - 1)^2} + \frac{c_2}{c_1 t}\cdot\frac{e}{e - 1} \left(\nr{\matX} + \nr{\matY}\right) \nonumber \\ &\leq 3 \eps \label{eq:result_4}
\end{align} where the last equality is satisfied since $t\geq \theta$ ($c_3\geq \frac{c_2e}{c_1 (e - 1)}$). By the union bound and since $0 < \eps < 1$, \eqref{eq:result_4} is violated with probability:
\begin{align}
\mathbb{P}\left(\left(\bigcap_{s = 1}^{S}\bigcap_{l = 1}^{L} \Omega(l,s)^c \right)^c\right) &\stackrel{\eqref{eq:proba3}}{\leq} \sum_{s = 1}^{S} \sum_{l = 1}^{L} e^{-c_1 \cdot \eps^2 \cdot t} \nonumber \\ &= e^{-c_1 \cdot \eps^2 \cdot t + \ln\left( \left\lceil \ln\left(\frac{e}{\eps}\right)  \right\rceil \right) } \leq \frac{\delta}{2} \nonumber
\end{align} where the last inequality is satisfied since $t \geq \frac{\ln(2/\delta) + \ln\left( \left\lceil \ln(e/\eps)\right\rceil\right)}{c_1 \cdot \eps^2}$.

The terms in \eqref{eq:termConstant} can be bounded as follows:
\begin{align}
\norm{\matX_{\theta}^c \matY_{\theta}} &+ \norm{\matX_{\theta} \matY_{\theta}^c} + \norm{\matX_{\theta}^c \matY_{\theta}^c} \nonumber \\ 
&\leq \norm{\matX_{\theta}^c} \norm{\matY_{\theta}} + \norm{\matX_{\theta}} \norm{\matY_{\theta}^c} + \norm{\matX_{\theta}^c} \norm{\matY_{\theta}^c} \nonumber \\
&\leq 2\eps + \eps^2 \leq 3 \eps \nonumber
\end{align}
where we used the fact that $\norm{\matX_\theta},~\norm{\matY_\theta} \leq 1$ and the inequalities $\norm{\matX_\theta^c} \leq \eps$ (similarly for $\matY_\theta^c$).
To bound the terms in \eqref{eq:termNormGaussian}, we provide the next Lemma; the proof is given in the appendix.
\begin{lemma}\label{lem:tailTerms}
If $t\geq \theta + 8 \ln(8/\delta)$ and $0< \eps <1 $, then, with probability at least $\delta / 2$, the term appearing in \eqref{eq:termNormGaussian} is at most $33\eps$.
\end{lemma}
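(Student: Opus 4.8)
The plan is to bound the three summands of \eqref{eq:termNormGaussian} by submultiplicativity of the spectral norm, $\norm{\matX_\bullet\matG^\top\matG\matY_\bullet}\le\norm{\matX_\bullet\matG^\top}\cdot\norm{\matG\matY_\bullet}$, and then to control the four one-sided sketched factors $\norm{\matX_\theta^c\matG^\top}$, $\norm{\matX_\theta\matG^\top}$, $\norm{\matG\matY_\theta^c}$ and $\norm{\matG\matY_\theta}$ separately. The naive estimate $\norm{\matX_\theta^c\matG^\top}\le\norm{\matX_\theta^c}\,\norm{\matG}$ is useless here, because $\norm{\matG}$ scales like $\sqrt{d/t}$ and therefore depends on the ambient dimension. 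The observation that unlocks the argument is that $\matX_\theta^c$ — however large its rank — has a \emph{tiny Frobenius norm}: since $\norm{\matX}=1$ and $\sigma_{\theta+1}\le\eps$,
\[
\frobnorm{\matX_\theta^c}^2=\sum_{i>\theta}\sigma_i^2\ \le\ \sigma_{\theta+1}\sum_{i\ge 1}\sigma_i\ \le\ \eps\,\|\matX\|_\star\ =\ \eps\,\nr{\matX},
\]
so that after dividing by $t\ge\theta=\Omega(\nr{\matX}/\eps^2)$ the quantity $\frobnorm{\matX_\theta^c}/\sqrt t$ is of order $\eps$; likewise $\frobnorm{\matY_\theta^c}/\sqrt t=O(\eps)$ via $\nr{\matY}$, while the ``head'' pieces satisfy $\norm{\matX_\theta}=1$ and $\frobnorm{\matX_\theta}^2\le\sum_{i\ge1}\sigma_i=\nr{\matX}$.

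The main tool I would invoke is the standard corollary of Chevet's inequality combined with Gaussian concentration of Lipschitz functions: for any fixed matrix $A$ and any $\beta\in(0,1)$,
\[
\Prob{\,\norm{\matG A}\ \le\ \norm{A}\Bigl(1+\sqrt{\tfrac{2\ln(1/\beta)}{t}}\Bigr)+\frac{\frobnorm{A}}{\sqrt t}\,}\ \ge\ 1-\beta ,
\]
since Chevet gives $\EE\norm{\matG A}\le\norm{A}+\frobnorm{A}/\sqrt t$ (after accounting for the $1/t$ variance of $\matG$) and $\matG\mapsto\norm{\matG A}$ is $\norm{A}$-Lipschitz in the Frobenius norm. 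Taking $\beta=\delta/8$ and using $t\ge 8\ln(8/\delta)$ — which makes the parenthetical deviation factor at most $2$ — I would apply this four times: with $A=(\matX_\theta^c)^\top$, using $\norm{\matX_\theta^c}=\sigma_{\theta+1}\le\eps$ and $\frobnorm{\matX_\theta^c}/\sqrt t\le\eps$, to get $\norm{\matX_\theta^c\matG^\top}=\norm{\matG(\matX_\theta^c)^\top}\le 3\eps$; symmetrically with $A=\matY_\theta^c$ to get $\norm{\matG\matY_\theta^c}\le 3\eps$; with $A=\matX_\theta^\top$, using $\norm{\matX_\theta}=1$ and $\frobnorm{\matX_\theta}/\sqrt t\le\eps$, to get $\norm{\matX_\theta\matG^\top}\le 3$; and with $A=\matY_\theta$ to get $\norm{\matG\matY_\theta}\le 3$. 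Each of the four estimates fails with probability at most $\delta/8$.

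I would then intersect the four good events — a union bound costs $4\cdot(\delta/8)=\delta/2$ — and, by submultiplicativity, conclude that on this event the quantity in \eqref{eq:termNormGaussian} is at most
\[
\norm{\matX_\theta^c\matG^\top}\norm{\matG\matY_\theta}+\norm{\matX_\theta\matG^\top}\norm{\matG\matY_\theta^c}+\norm{\matX_\theta^c\matG^\top}\norm{\matG\matY_\theta^c}\ \le\ 3\eps\cdot 3+3\cdot 3\eps+3\eps\cdot 3\eps\ \le\ 33\eps ,
\]
where $0<\eps<1$ was used in the last step; the only hypotheses consumed are $t\ge\theta$ and $t\ge 8\ln(8/\delta)$, i.e.\ $t\ge\theta+8\ln(8/\delta)$, exactly as in the statement.

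The delicate point — and the reason this lemma is proved differently from the body of the main argument — is precisely the handling of the tails $\matX_\theta^c,\matY_\theta^c$: one must resist both passing $\matG$ through a spectral-norm bound (dimension dependent) and splitting $\matX_\theta^c$ into logarithmic-magnitude buckets (whose number is not controlled by the nuclear rank), and instead feed the tails, as fixed matrices, into the Chevet-type estimate in which only $\norm{\cdot}$ and $\frobnorm{\cdot}$ enter — both of which are $O(\eps)$ after the $1/\sqrt t$ rescaling precisely because $t=\Omega(\nr{\cdot}/\eps^2)$. Establishing the Frobenius bound $\frobnorm{\matX_\theta^c}^2\le\sigma_{\theta+1}\|\matX\|_\star$ and carrying the (deliberately loose) constants through the four applications is then routine.
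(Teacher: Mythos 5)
Your proof is correct and follows essentially the same route as the paper's: submultiplicativity, four applications of the one-sided Gaussian norm concentration bound (the paper's Lemma~\ref{lem:normGaussian}, i.e.\ Chevet plus Lipschitz concentration) each failing with probability at most $\delta/8$, and a union bound, with the arithmetic closing under $0<\eps<1$. The only divergence is bookkeeping: you bound $\frobnorm{\matX_{\theta}^c}^2 \leq \sigma_{\theta+1}\|\matX\|_{\star}$ and $\frobnorm{\matX_{\theta}}^2 \leq \|\matX\|_{\star}$ before dividing by $t \geq \theta$, which is actually more careful than the paper's chain $\frobnorm{\matX}/\sqrt{t} \leq \|\matX\|_{\star}/\sqrt{\theta} \leq \eps/\sqrt{c_3}$ (as written the latter only yields $\eps\sqrt{\nr{\matX}/c_3}$, so your variant is the one that cleanly delivers the claimed $O(\eps)$ bounds).
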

Applying the union bound on Lemma~\ref{lem:tailTerms} and the complement of $ \bigcap_{s = 1}^{S}\bigcap_{l = 1}^{L} \Omega(l,s)^c $, we conclude that the following inequality holds:
\begin{align}
&\norm{\matX \matG^\top \matG \matY - \matX \matY} \leq 39 \eps, \nonumber
\end{align} with probability at least $1 - \delta$. By rescaling $\eps$ we obtain the required result.
\section{Application to data-driven low-dimensional embedding}
As an application of the result above, we consider the following question:

\vskip.1in
\noindent \textsc{Problem:} {\it 
Given a collection $n$ points in $\mathbb{R}^d$, forming a matrix $\matA \in \RR^{n \times d}$ and an error parameter $\eps>0$, construct efficient and approximately accurate low-dimensional embedding $\matG \in \mathbb{R}^{t \times d}$ such that:}
\begin{align}
\|\matG(\mathbf{a}_i - \mathbf{a}_j)^T\|_2^2 \stackrel{\pm \eps}{\approx} \|\mathbf{a}_i - \mathbf{a}_j\|_2^2,\quad \forall \mathbf{a}_i \in \mathbb{R}^{1 \times d}, ~\mathbf{a}_i \in \text{\texttt{Rows}}(\matA). \nonumber
\end{align}
\noindent {\it The target dimension $t$ is to heavily depend on the input data matrix $\matA$ and be independent of the input dimensions.}
\vskip.1in


Inspired by Theorem~\ref{thm:main}, the following theorem proposes a data-dependent randomized low-dimensional embedding $\matG$ with the following guarantees:
\begin{theorem}\label{eq:main_thm}
Let $\matA \in \mathbb{R}^{n \times d}$. Moreover, assume that $\matA$ with nuclear rank $ \nr{\matA} \ll O(\log n)$. If $t = \Omega\left( \frac{\nr{\matA} + \log( \log(1/\eps)) + \log(1/\delta)}{\eps^2}\right)$, then the following inequalities hold 
\begin{align}
\forall i, j \in [n]\quad \left| \left\|\matG\left(\mathbf{a}_i - \mathbf{a}_j\right)^\top\right\|_2^2 - \left\|\mathbf{a}_i - \mathbf{a}_j\right\|_2^2 \right| \leq 2\eps \norm{\matA}^2, \nonumber
\end{align}
with probability at least $1-\delta$.
\end{theorem}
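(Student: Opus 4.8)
\textbf{Proof proposal for Theorem~\ref{eq:main_thm}.}
The plan is to obtain the statement as an immediate corollary of Theorem~\ref{thm:main} by rewriting each squared embedded pairwise distance as a bilinear form evaluated at the error matrix $\matA\matG^\top\matG\matA^\top - \matA\matA^\top$. Concretely, for $i,j \in [n]$ I would write $\mathbf{a}_i - \mathbf{a}_j = (\mathbf{e}_i - \mathbf{e}_j)\matA$, where $\mathbf{e}_i, \mathbf{e}_j \in \RR^{1\times n}$ are the canonical basis row vectors; then $\matG(\mathbf{a}_i-\mathbf{a}_j)^\top = \matG\matA^\top(\mathbf{e}_i-\mathbf{e}_j)^\top$, and expanding both squared norms yields the identity
\begin{align}
\left\|\matG(\mathbf{a}_i-\mathbf{a}_j)^\top\right\|_2^2 - \left\|\mathbf{a}_i-\mathbf{a}_j\right\|_2^2 = (\mathbf{e}_i-\mathbf{e}_j)\left(\matA\matG^\top\matG\matA^\top - \matA\matA^\top\right)(\mathbf{e}_i-\mathbf{e}_j)^\top. \nonumber
\end{align}

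Next I would invoke Theorem~\ref{thm:main} with $\matX := \matA$ and $\matY := \matA^\top$, so that $\widehat{\matX} = \matA\matG^\top$, $\widehat{\matY} = \matG\matA^\top$, and $\widehat{\matX}\widehat{\matY} - \matX\matY = \matA\matG^\top\matG\matA^\top - \matA\matA^\top$. Since the nuclear and spectral norms are invariant under transposition, $\nr{\matY} = \nr{\matA^\top} = \nr{\matA}$, so the dimension requirement $t = \Omega\!\left((\nr{\matX}+\nr{\matY}+\log\log(1/\eps)+\log(1/\delta))/\eps^2\right)$ of Theorem~\ref{thm:main} collapses exactly to the hypothesis $t = \Omega\!\left((\nr{\matA}+\log\log(1/\eps)+\log(1/\delta))/\eps^2\right)$. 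Theorem~\ref{thm:main} then yields, with probability at least $1-\delta$, the spectral-norm bound $\norm{\matA\matG^\top\matG\matA^\top - \matA\matA^\top} \leq \eps\,\norm{\matA}\norm{\matA^\top} = \eps\,\norm{\matA}^2$.

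Finally I would pass from the operator norm to the bilinear form: on the event above, for every $i \neq j$,
\begin{align}
\left|\left\|\matG(\mathbf{a}_i-\mathbf{a}_j)^\top\right\|_2^2 - \left\|\mathbf{a}_i-\mathbf{a}_j\right\|_2^2\right| \;\leq\; \norm{\mathbf{e}_i-\mathbf{e}_j}^2\,\norm{\matA\matG^\top\matG\matA^\top-\matA\matA^\top} \;=\; 2\eps\,\norm{\matA}^2, \nonumber
\end{align}
using $\norm{\mathbf{e}_i-\mathbf{e}_j}^2 = 2$ (the case $i=j$ being trivial), which is exactly the claimed inequality; note that because Theorem~\ref{thm:main} controls the full spectral norm, this estimate holds \emph{simultaneously} for all $\binom{n}{2}$ pairs on a single event of probability at least $1-\delta$, so no union bound over pairs is required --- this is precisely where the operator-norm (rather than entrywise) form of the matrix-multiplication guarantee is essential.

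I do not anticipate a genuine obstacle, as this is essentially a one-line reduction; the only points worth stating carefully are the transpose-invariance of $\nr{\cdot}$ (so that the target dimension stays proportional to $\nr{\matA}$ rather than growing), and the observation that the hypothesis $\nr{\matA} \ll O(\log n)$ plays no role in the derivation itself --- it serves only to place the target dimension $t$ strictly below the $O(\log n/\eps^2)$ of a generic Johnson--Lindenstrauss embedding, which is the point of the result.
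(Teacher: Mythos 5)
Your proposal is correct and follows essentially the same route as the paper: both reduce to Theorem~\ref{thm:main} with $\matX = \matA$, $\matY = \matA^\top$, and then extract the pairwise-distance guarantee by testing the symmetric error matrix $\matA\matG^\top\matG\matA^\top - \matA\matA^\top$ against the vectors $\mathbf{e}_i - \mathbf{e}_j$, with the factor $2$ coming from $\norm{\mathbf{e}_i - \mathbf{e}_j}^2 = 2$. The paper phrases this as restricting the spectral-norm maximization to the normalized set $\frac{1}{\sqrt{2}}(\mathbf{e}_i-\mathbf{e}_j)$ while you bound the bilinear form directly, but these are the same argument; your remarks on transpose-invariance of $\nr{\cdot}$ and on the purely motivational role of the $\nr{\matA}\ll O(\log n)$ hypothesis are also accurate.
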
 

To fully specify the algorithmic procedure followed in practice, only $\matA \in \mathbb{R}^{n \times d}$ and constants $\delta,~\varepsilon \in (0, 1)$ are given as input and $\matG \in \mathbb{R}^{t \times d}$ is returned. Moreover, $\matG$ guarantees to preserve the distances between the rows of $\matA$ with additive error $2\varepsilon \norm{\matA}^2$ and with probability at least $1- \delta$. The exact steps followed are given in Algorithm \ref{algo:1}.

\begin{algorithm}[!htp]
\caption{Data-Driven Random Linear Embedding}\label{algo:1}
\begin{small}
\begin{algorithmic}[1]
\Statex ~~~\textbf{Input:} $\matA \in \mathbb{R}^{n \times d}$, $\varepsilon, \delta \in (0, 1)$
\algrule
\State ~~~ Compute (or approximate) $\|\matA\|_{\star}$ and $\norm{\matA}$.
\State ~~~ Set parameter $t = \Omega\left( \frac{\nr{\matA} + \log \log(1/\eps) + \log(1/\delta)}{\eps^2} \right)$.
\State ~~~ Generate $\matG \in \mathbb{R}^{t \times n}$ according to Theorem \ref{eq:main_thm}.
\algrule
\State ~~~\textbf{Output:} Matrix $\matG$ and $\matG\matA$ that satisfies Theorem \ref{eq:main_thm} with probability at least $1 - \delta$.
\end{algorithmic}
\end{small}
\end{algorithm}

\begin{proof}
By substituting $\matX = \matY^\top = \matA$ in Theorem \ref{thm:main}, we have:
\begin{align}
\mathbb{P}\left( \norm{\matA \matG^\top \matG \matA^\top - \matA \matA^\top} \leq \eps \norm{\matA}^2\right) \geq 1 - \delta. \nonumber
\end{align}
By the definition of the spectral norm, the condition above can be further written as the following maximization problem:
\begin{align}
\max_{\mathbf{x}: \mathbf{x} \in \mathbb{R}^n, \|\mathbf{x}\|_2 = 1} \left\{ \left| \mathbf{x}^\top \left( \matA \matG^\top \matG \matA^\top - \matA \matA^\top\right) \mathbf{x} \right| \right\} \leq \eps \norm{\matA}^2. \nonumber
\end{align} Moreover, it is obvious that, if we restrict the search space $\mathbf{x} \in \mathbb{R}^n$ to $\mathbf{x} \in \mathbb{B}^n$ where:
\begin{align}
\mathbb{B}^n = \left\{ \mathbf{y} ~ | ~\mathbf{y} = \frac{1}{\sqrt{2}}\left(\mathbf{e}_i - \mathbf{e}_j\right), \forall i, j \in [n] \right\}, \nonumber
\end{align} where $\mathbf{e}_i$ is the standard basis vector with 1 in the $i$-th position, we further have:
\begin{align}
\max_{\mathbf{x} : \mathbf{x} \in \mathbb{B}^n} &\left\{ \left| \mathbf{x}^\top \left( \matA \matG^\top \matG \matA^\top - \matA \matA^\top\right) \mathbf{x} \right| \right\} \nonumber \\ &\leq \max_{\mathbf{x}: \mathbf{x} \in \mathbb{R}^n, \|\mathbf{x}\|_2 = 1} \left\{ \left| \mathbf{x}^\top \left( \matA \matG^\top \matG \matA^\top - \matA \matA^\top\right) \mathbf{x} \right| \right\} \nonumber
\end{align} which leads to:
\begin{align}{\label{eq:2}}
\mathbb{P}\left(\max_{\mathbf{x} : \mathbf{x} \in \mathbb{B}^n} \left\{ \left| \mathbf{x}^\top \left( \matA \matG^\top \matG \matA^\top - \matA \matA^\top\right) \mathbf{x} \right| \right\} \leq \eps \norm{\matA}^2\right) \geq 1 - \delta.
\end{align} 
Observe also that $\left| \mathbf{x}^\top \left( \matA \matG^\top \matG \matA^\top - \matA \matA^\top\right) \mathbf{x} \right| = \left| \|\matG \matA^\top\mathbf{x}\|_2^2 - \|\matA^\top \mathbf{x}\|_2^2 \right|$, which further transforms \eqref{eq:2} as:
\begin{align}{\label{eq:3}}
\mathbb{P}\left(\max_{\mathbf{x} : \mathbf{x} \in \mathbb{B}^n} \left\{ \left| \|\matG \matA^\top\mathbf{x}\|_2^2 - \|\matA^\top \mathbf{x}\|_2^2 \right| \right\} \leq \eps \|\matA\|_2^2\right) \geq 1 - \delta.
\end{align} For any vector $\mathbf{x} \in \mathbb{B}^n$, we observe that $\mathbf{x}^\top \matA \matG^\top = 1/\sqrt{2}\left(\mathbf{a}_i - \mathbf{a}_j\right)\matG^\top$ while $\matG \matA^\top \mathbf{x} = 1/\sqrt{2}\matG \left(\mathbf{a}_i - \mathbf{a}_j\right)^\top$. Similarly, $\mathbf{x}^\top \matA= 1/\sqrt{2}\left(\mathbf{a}_i - \mathbf{a}_j\right)$ and $\matA^\top \mathbf{x} = 1/\sqrt{2}\left(\mathbf{a}_i - \mathbf{a}_j\right)^\top$. Thus, \eqref{eq:3} becomes:
\begin{small}
\begin{align}{\label{eq:4}}
\mathbb{P}\Big(\max_{i, j \in [d], i \neq j} \left\{ \left|~\left\|\matG\left(\mathbf{a}_i - \mathbf{a}_j\right)^\top\right\|_2^2 - \left\|\mathbf{a}_i - \mathbf{a}_j\right\|_2^2~\right| \right\} &\leq 2 \eps \|\matA\|_2^2\Big) \nonumber \\ &\geq 1 - \delta.
\end{align}
\end{small} Since \eqref{eq:4} is satisfied for the maximizing combination of $i, j$, we can safely remove the maximization to get:
\begin{align}
\mathbb{P}\left(\left|~\left\|\matG\left(\mathbf{a}_i - \mathbf{a}_j\right)^\top\right\|_2^2 - \left\|\mathbf{a}_i - \mathbf{a}_j\right\|_2^2~\right| \leq 2\eps \|\matA\|_2^2\right) \geq 1 - \delta, \nonumber
\end{align} which completes the proof.
\end{proof} A complete set of experiments will be included in an extended version of the paper.
\section{Conclusions}

We present a novel analysis for a class of randomized and provably $\varepsilon$-accurate algorithms for the problem of matrix multiplication. 
As an application of this result, we show the utilization of the proposed scheme on data-driven low dimensional embeddings with additive error approximation, in the case where the data live on a subspace characterized by a small \emph{nuclear rank}. 

An interesting question to pursue lies in the substitution of nuclear rank by stable rank: recent developments on this topic \cite{zouzias2013randomized} show similar results using the latter metric as the intrinsic data dimension; a weaker assumption than the nuclear rank. However, the dependence on the approximation error is of the order $O(\frac{1}{\varepsilon^4})$, as opposed to $O(\frac{1}{\varepsilon^2})$ presented in this work. We hope this paper triggers future efforts to improve stable rank-based bounds with respect to error dependency.

\section*{Acknowledgment}
A. Zouzias would like to thank Mark Rudelson for several discussions on the approximate matrix multiplication problem. This research has received funding from the ERC under the EU's Seventh Framework Programme (FP7/2007-2013) / ERC grant agreement $n^{o}$ 259569.

\bibliographystyle{IEEEtran}
\bibliography{randproj}

\section*{Appendix}

In our analysis, we make use of the following concentration bound on the operator norm of the product of a fixed matrix with a Gaussian matrix which is a direct consequence of concentration of Lipschitz function on Gaussian space, see e.g.~\cite[p.~10]{zouzias2013randomized}.
\begin{lemma}\label{lem:normGaussian}
Let $\matX \in \mathbb{R}^{n\times d}$ and $t\geq 1$. For every $\tau>0$:
\begin{equation}
\Prob{ \norm{\matX\matG^\top} \geq \frobnorm{\matX} / \sqrt{t} + \norm{\matX} + \tau \norm{\matX}/\sqrt{t}} \leq \exp(-\tau^2 /8). \nonumber
\end{equation}
\end{lemma}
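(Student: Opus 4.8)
The plan is to view $\matG\mapsto\norm{\matX\matG^\top}$ as a Lipschitz function of the Gaussian coordinates of $\matG$ and pair the Gaussian concentration inequality for Lipschitz functions with an a priori bound on the expectation. First I would normalize: write $\matG=\widetilde{\matG}/\sqrt t$, where $\widetilde{\matG}\in\RR^{t\times d}$ has i.i.d.\ $\mathcal{N}(0,1)$ entries, so that the quantity of interest becomes a function on $\RR^{td}$ endowed with its Euclidean ($=$ Frobenius) norm. Next I would pin down the Lipschitz constant: for matrices $\matG_1,\matG_2$, the triangle inequality for the spectral norm together with submultiplicativity gives $\bigl|\,\norm{\matX\matG_1^\top}-\norm{\matX\matG_2^\top}\,\bigr|\le\norm{\matX(\matG_1-\matG_2)^\top}\le\norm{\matX}\,\frobnorm{\matG_1-\matG_2}$, so $\matG\mapsto\norm{\matX\matG^\top}$ is $\norm{\matX}$-Lipschitz in Frobenius norm, and after the $1/\sqrt t$ rescaling the Lipschitz constant on standard-Gaussian space is $\norm{\matX}/\sqrt t$. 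The concentration inequality for Lipschitz functions of a standard Gaussian vector then gives, for every $s>0$, $\Prob{\norm{\matX\matG^\top}\ge\EE\norm{\matX\matG^\top}+s}\le\exp\!\bigl(-s^2 t/(2\norm{\matX}^2)\bigr)$; choosing $s=\tau\norm{\matX}/\sqrt t$ already yields $\exp(-\tau^2/2)$, which beats the claimed $\exp(-\tau^2/8)$, so the stated constant is comfortably met.

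It remains to show $\EE\norm{\matX\matG^\top}\le\frobnorm{\matX}/\sqrt t+\norm{\matX}$. Writing $\norm{\matX\matG^\top}=\sup_{\|u\|=\|v\|=1}u^\top\matX\matG^\top v=\sup_{u,v}\ip{\matG^\top}{(\matX^\top u)v^\top}$ exhibits it as the supremum of a centered Gaussian process over the product of unit spheres. I would compare it, via the Sudakov--Fernique inequality (this is Chevet's inequality), with the process $(u,v)\mapsto\ip{g}{\matX^\top u}+\norm{\matX}\ip{h}{v}$, where $g\in\RR^d$ and $h\in\RR^t$ are independent standard Gaussians: a short increment computation shows that on the product of unit spheres the increments of the comparison process dominate those of the original, whence $\EE\norm{\matX\matG^\top}\le\tfrac{1}{\sqrt t}\bigl(\EE\norm{\matX g}+\norm{\matX}\,\EE\norm{h}\bigr)\le\frobnorm{\matX}/\sqrt t+\norm{\matX}$, using Jensen's inequality with $\EE\norm{\matX g}^2=\frobnorm{\matX}^2$ and $\EE\norm{h}\le\sqrt t$. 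Alternatively, this expectation estimate is exactly the one recorded in~\cite{zouzias2013randomized} and may simply be quoted.

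The only real obstacle is this expectation bound: the point is to obtain the \emph{dimension-free} coefficient $\frobnorm{\matX}$ in front of $1/\sqrt t$ --- a crude $\varepsilon$-net over $u$ would instead produce a term of order $\norm{\matX}\sqrt n$ --- and that genuinely requires the Gaussian comparison inequality rather than an elementary union bound. Once it is available, the Lipschitz estimate and the concentration step are entirely routine, and the generous constant $8$ absorbs any slack between the $\exp(-\tau^2/2)$ one actually gets and the bound as stated.
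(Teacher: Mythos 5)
Your proof is correct and is exactly the argument the paper has in mind: the paper does not prove this lemma but merely cites it as ``a direct consequence of concentration of Lipschitz functions on Gaussian space,'' and your writeup supplies precisely those details --- the $\norm{\matX}/\sqrt{t}$ Lipschitz estimate, Gaussian concentration (which indeed gives the stronger $\exp(-\tau^2/2)$), and the Chevet/Sudakov--Fernique bound $\EE\norm{\matX\matG^\top}\le\frobnorm{\matX}/\sqrt{t}+\norm{\matX}$ for the mean. You also correctly identify the expectation bound as the only nontrivial ingredient; everything checks out.
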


\subsection{Proof of Lemma~\ref{lem:prob_bound}}
In all the cases below, observe $\gamma_{l,s} > \eps$ by definition. Using Eqn.~\eqref{eq:proba3}, we consider the following two cases:
\begin{itemize}
\item [$(i)$] $\gamma_{l,s}  < 1$: in this case, we have $\min(\gamma_{l,s}, 1) \geq \min(\eps, 1) = \eps$. Thus, \eqref{eq:proba3} becomes:
\begin{align}
\mathbb{P}\left(\Omega(l, s)\right) &\leq c_2^{r(\matX_{\theta}^{l}) + r(\matY_{\theta}^s)}\cdot e^{-c_1\left(\eps + c_2\frac{r(\matX^{l}_{\theta}) + r(\matY^{s}_{\theta})}{ c_1 \eps t}\right)t \cdot \eps} \nonumber \\ 
&\leq e^{-c_1\eps^2 t} \cdot e^{-c_2\left(r(\matX^{l}_{\theta}) + r(\matY^{s}_{\theta})\right)} \cdot e^{\ln (c_2)\left(r(\matX^{l}_{\theta}) + r(\matY^{s}_{\theta})\right)} \nonumber \\
&\leq e^{-c_1\eps^2 t}
\end{align}
\item [$(ii)$] $\gamma_{l,s}  \geq  1$: in this case, $\min(\gamma_{l,s}, 1) = 1$. Following the same steps as above, we have:
\begin{align}
\mathbb{P}\left(\Omega(l, s)\right) &\leq c_2^{r(\matX_{\theta}^{l}) + r(\matY_{\theta}^s)}\cdot e^{-c_1\left(\eps + c_2\frac{r(\matX^{l}_{\theta}) + r(\matY^{s}_{\theta})}{ c_1 \eps t}\right)t} \nonumber \\ 
&\leq e^{-c_1\eps t} \cdot e^{\left(1 - \frac{1}{\eps}\right)c_2\left(r(\matX^{l}_{\theta}) + r(\matY^{s}_{\theta})\right)}.  \nonumber
\end{align}
However, since $0 < \eps < 1$, we have $1 - \frac{1}{\eps} < 0$ and thus, the above inequality can be further upper bounded by:
\begin{align}
\mathbb{P}\left(\Omega(l, s)\right) \leq e^{-c_1\eps t}. \nonumber
\end{align}
\end{itemize}
\subsection{Proof of Lemma~\ref{lem:tailTerms}}
Apply Lemma~\ref{lem:normGaussian} with $\tau = \sqrt{t}$ on all four matrices $\matX_{\theta}^c$, $\matX_{\theta}$, $\matY_{\theta}$ and $\matY_{\theta}^c$. With probability at least $1-\exp(-t/8)$:
\begin{align}
		\norm{\matX_{\theta}^c\matG^\top} &\leq \frac{\frobnorm{\matX_{\theta}^c}}{\sqrt{t}}  + \left(1 + \frac{\tau}{\sqrt{t}}\right)\norm{\matX_{\theta}^c} \nonumber \\ &\stackrel{\tau = \sqrt{t}}{=} \frac{1}{\sqrt{t}} \frobnorm{\matX_{\theta}^c} + 2\eps \nonumber \\
										&\leq \frac{1}{\sqrt{t}} \frobnorm{\matX} + 2\eps \leq \frac{1}{\sqrt{t}} \cdot \|\matX\|_{\star} + 2\eps  \nonumber\\
									   &\stackrel{t \geq \theta}{\leq} \frac{1}{\sqrt{\theta}} \nr{\matX} + 2\eps \leq \frac{\eps}{\sqrt{c_3}} + 2\eps \stackrel{c_3 > 1}{\leq} 3\eps \nonumber
\end{align} 
by definition of $\theta$ and $\norm{\matX_{\theta}^c} \leq \eps$.
Similarly, with probability at least $1-\exp(-t/8)$:
\begin{align}
\norm{\matX_{\theta}\matG^\top} &\leq  \frac{\frobnorm{\matX_{\theta}}}{\sqrt{t}} + \left(1 + \frac{\tau}{\sqrt{t}}\right)\norm{\matX_{\theta}} \nonumber \\ &\stackrel{\tau = \sqrt{t}}{=} \frac{1}{\sqrt{t}} \frobnorm{\matX_{\theta}} + 2 \nonumber \\
								&\leq \frac{1}{\sqrt{t}} \frobnorm{\matX} + 2 \leq \frac{1}{\sqrt{t}} \cdot \|\matX\|_{\star} + 2  \nonumber\\
	     					   &\stackrel{t \geq \theta}{\leq}  \frac{1}{\sqrt{\theta}} \nr{\matX} + 2\nonumber \leq \frac{\eps}{\sqrt{c_3}} + 2 \stackrel{c_3 > 1}{\leq} 2(\eps + 1) \nonumber
\end{align}
		by definition of $\theta$ and $\norm{\matX_{\theta}} = 1$. 
We further observe $\norm{\matY_{\theta}^c\matG^\top} \leq 3\eps$ and $\norm{\matX_{\theta}\matG^\top} \leq 2(\eps + 1)$ both with probability at least $1-\exp(-t/8)$. 
	Union bound all the above four results, it follows that
	\begin{align}
	&\norm{ \matX_{\theta}^c\matG^\top \matG\matY_{\theta}} + \norm{\matX_{\theta}\matG^\top \matG\matY_{\theta}^c} + \norm{ \matX_{\theta}^c\matG^\top \matG\matY_{\theta}^c}  \nonumber \\ 
	&\leq 12\eps\left(\eps + 1\right) + 9\eps^2 \leq 12\eps  + 21\eps^2 \leq 33\eps \nonumber
	\end{align} 
which holds with probability at most $4\cdot e^{-t/8}$.  By definition of $t$, the above inequality is violated with probability at most $\delta/2$.

\end{document}